\title{Conjugacy stability of parabolic subgroups of Artin-Tits groups of spherical type}
\author{Matthieu Calvez, Bruno A. Cisneros de la Cruz, Mar\'ia Cumplido}
\date{\today }
\begin{document}

\maketitle

% ------- Theorem styles -------
\theoremstyle{plain}
\newtheorem{theorem}{Theorem}

\newaliascnt{lemma}{theorem}
\newtheorem{lemma}[lemma]{Lemma}
\aliascntresetthe{lemma}
\providecommand*{\lemmaautorefname}{Lemma}

\newaliascnt{proposition}{theorem}
\newtheorem{proposition}[proposition]{Proposition}
\aliascntresetthe{proposition}
\providecommand*{\propositionautorefname}{Proposition}

\newaliascnt{corollary}{theorem}
\newtheorem{corollary}[corollary]{Corollary}
\aliascntresetthe{corollary}
\providecommand*{\corollaryautorefname}{Corollary}

\newaliascnt{conjecture}{theorem}
\newtheorem{conjecture}[conjecture]{Conjecture}
\aliascntresetthe{conjecture}
\providecommand*{\conjectureautorefname}{Conjecture}

\theoremstyle{remark}

\newaliascnt{claim}{theorem}
\newaliascnt{remark}{theorem}
\newtheorem{claim}[claim]{Claim}
\newtheorem{remark}[remark]{Remark}
\newaliascnt{notation}{theorem}
\newtheorem{notation}[notation]{Notation}
\aliascntresetthe{notation}
\providecommand*{\notationautorefname}{Notation}

\aliascntresetthe{claim}
\providecommand*{\claimautorefname}{Claim}

\aliascntresetthe{remark}
\providecommand*{\remarkautorefname}{Remark}

\newtheorem*{claim*}{Claim}
\theoremstyle{definition}

\newaliascnt{definition}{theorem}
\newtheorem{definition}[definition]{Definition}
\aliascntresetthe{definition}
\providecommand*{\definitionautorefname}{Definition}

\newaliascnt{example}{theorem}
\newtheorem{example}[example]{Example}
\aliascntresetthe{example}
\providecommand*{\exampleautorefname}{Example}

\newcommand{\myref}[2]{\hyperref[#1]{#2~\ref*{#1}}}

%|<------------------------------------------------------------------------>|

\begin{abstract}
We give a complete classification of conjugacy stable parabolic subgroups of Artin-Tits groups of spherical type. This answers a question posed by Ivan Marin and generalizes a theorem obtained by Juan Gonz\'alez-Meneses in the specific case of Artin braid groups.
\end{abstract}

\section{Introduction}

\noindent
Let $S$ be a finite set. A {\it Coxeter matrix} over $S$ is a symmetric square matrix $M = (m_{s,t})_{s,t\in S}$ indexed by the elements of $S$, such that $m_{s,s}=1$, and $m_{s,t} \in \{ 2,3,4, \ldots, \infty \}$ for all $s,t\in S$, $s\neq t$.  Such a Coxeter matrix is usually represented by its {\it Coxeter graph}, denoted by $\Gamma = \Gamma_S =\Gamma(M)$. This  is a labelled graph whose set of vertices is $S$, in which two distinct vertices $s$ and $t$ are connected by an edge if $m_{s,t}\geqslant 3$; if in addition $m_{s,t}\geqslant 4$, the corresponding edge wears the label  $m_{s,t}$.  

\medskip
\noindent
The {\it  Artin-Tits system} of $\Gamma$ is the pair $(A,S)$, where $A=A_{\Gamma}$ is the group
\begin{eqnarray*}
%\label{Presentation} 
A_{\Gamma} = \left\langle S \ \left|  	\begin{array}{ll} 		
							\Pi(s,t;m_{s,t}) = \Pi(t,s;m{s,t}) 	& \text{for all } s,t\in S, \; s\neq t, \; m_{s,t}\neq \infty 
						\end{array}\right.\right\rangle , 
\end{eqnarray*}
where, for $m\geqslant 2$, $$\Pi(a,b;m)=\begin{cases}  (ab)^k & {\text{if $m=2k$}},\\  (ab)^ka &  {\text{if $m=2k+1$.}}\\ \end{cases}$$
The group $A_{\Gamma}$ is called the {\it Artin-Tits group} of $\Gamma$; sometimes we shall also use the notation $A_{S}$ to refer to this group. If we add to the presentation of $A_\Gamma$ the relations $s^2=1$, for every $s\in S$, we obtain the \emph{Coxeter group} associated to $A_\Gamma$. When this group is finite we say that $A_\Gamma$ has \emph{spherical type}. By extension, we say that~$\Gamma$ is of spherical type if~$A_{\Gamma}$ has spherical type. 
$A_{\Gamma}$ is called {\it  irreducible} if the graph $\Gamma$ is connected and {\it reducible} otherwise. Notice that if $\Gamma_1,\cdots,\Gamma_r$ are the connected components of~$\Gamma$, then $A_\Gamma=A_{\Gamma_1}\times \cdots \times A_{\Gamma_r}$.  We recall Coxeter's classification \citep{Coxeter} of connected Coxeter graphs of spherical type (hence of irreducible Artin-Tits groups of spherical type) in \autoref{F:Coxeter}. The name of the graph will be used to refer to the corresponding Artin-Tits group; for instance the Artin-Tits group of type $E_7$ is the Artin-Tits group of the graph~$E_7$.  

\begin{figure}[hbt]
\centering
\includegraphics[scale=1]{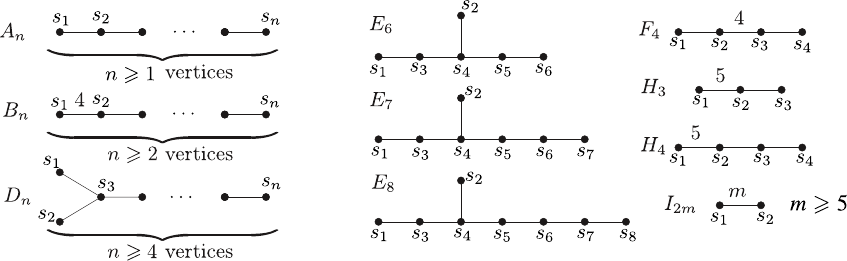}
\caption{Connected Coxeter graphs of spherical type with a specific enumeration of the vertices.}\label{F:Coxeter}
\end{figure}

\medskip\noindent
Let $X$ be a subset of $S$. The {\it  standard parabolic subgroup} associated to~$X$ is the subgroup of~$A_{\Gamma}$ generated by $X$ and denoted  by~$A_X$.  Consider the subgraph $\Gamma_X$ of $\Gamma=\Gamma_S$ generated by~$X$ (the set of vertices is $X$ and the edges are exactly the edges of~$\Gamma_S$ which connect two vertices in~$X$). 
It is known \citep{VanderLek} that $(A_{X}, X)$ is the Artin-Tits system of~$\Gamma_X$. A {\it parabolic subgroup} is a subgroup~$P$ conjugate to some standard parabolic subgroup~$A_X$. Note that~$P$ and~$A_X$ are isomorphic; if~$A_X$ is irreducible of spherical type, the type of~$P$ is the name of the graph~$\Gamma_X$ in \autoref{F:Coxeter}. 

\medskip\noindent
The flagship example of an Artin-Tits group of spherical type is the braid group on $n$ strands~$\mathcal B_n$ ($n\geqslant 2$) \citep{Artin}. It is associated to the Coxeter graph $A_{n-1}$ depicted in \autoref{F:Coxeter}; the corresponding Coxeter group is the symmetric group $\mathfrak S_n$. We recall that each generator $s_i$ is the crossing of the strands in the positions $i$ and $i+1$. Let~$m$ and~$n$ be two positive integers such that $2\leqslant m\leqslant n$. Considering only the $m-1$ first vertices of the graph $A_{n-1}$ furnishes a fundamental example of a standard (irreducible) parabolic subgroup: the braid group~$\mathcal B_m$ embedded in~$\mathcal B_n$ by adding $n-m$ straight strands to any $m$-strand braid.

\bigskip\noindent
It was shown in \citep{GMParabolic} that the above embedding 
$\mathcal B_m\hookrightarrow \mathcal B_n$ (for $2\leqslant m<n$) does not merge conjugacy classes, i.e. if two $m$-strand braids are conjugate in the $n$-strand braid group,  they must already be conjugate as $m$-strand braids.  

\medskip\noindent
Motivated by the latter result, Ivan Marin asked some years ago whether \emph{standard} parabolic subgroups of irreducible Artin-Tits groups of spherical type are conjugacy stable. A (non-trivial) proper subgroup~$H$ of a group~$G$ is said to be \emph{conjugacy stable} if any two elements of~$H$ which are conjugated in~$G$ must be conjugated through an element of~$H$; this is equivalent to saying that the conjugacy classes of~$H$ do not merge in~$G$. It is an easy exercise to check that conjugacy stability is preserved under subgroup conjugation; therefore Marin's question actually covers all parabolic subgroups of irreducible Artin-Tits groups of spherical type. 

\bigskip\noindent
Suppose now that $A_S$ is a reducible Artin-Tits group of spherical type, expressed as the direct product $A_S=A_{S_1}\times \ldots \times A_{S_r}$, where $r>1$ and each~$A_{S_i}$ is non-trivial and irreducible.  For a subset $X\subsetneq S$, we can consider $X_i=X\cap S_i$ ($i=1,\ldots,r$) and decompose $A_X$ as a direct product of parabolic subgroups $A_X=A_{X_1}\times \ldots \times A_{X_r}$ --notice that~$A_{X_i}$ might be trivial (when~$X_i$ is empty) or reducible. Since elements in distinct components of~$A_S$ commute pairwise, $A_X$ is conjugacy stable in~$A_S$ if and only if $A_{X_i}$ is conjugacy stable in~$A_{S_i}$ for all~$i$. 

\medskip\noindent
In view of the above remarks, the following, which is our main result, allows to decide the conjugacy stability of any given parabolic subgroup of any Artin-Tits group of spherical type:

\begin{theorem}\label{mainT}
Let $A_{\Gamma}=A_S$ be an irreducible Artin-Tits group of spherical type and let ${\emptyset\neq X\subsetneq S}$. 
%and $\alpha \in A_S$, 
%and consider the proper parabolic subgroup $P:= \alpha^{-1}A_X\alpha$ of $A_S$. 
\begin{itemize}
\item[(1)] If $A_X$ is irreducible, $A_X$ is conjugacy stable in $A_S$ except in the following cases: 
\begin{itemize}
%\item $A_X$ is of type $D_4$ and $A_S$ is of type $E_6,E_7,E_8$ or $D_n$ ($n$ odd, $n \geqslant 5$),
\item[(a)] $A_X$ is of type $D_5$ and $A_S$ is of type $E_6,E_7$ or $E_8$,
%\item $A_X$ is of type $D_6$ and $A_S$ is of type $E_7,E_8$ or $D_n$ ($n$ odd, $n\geqslant 7$),
\item[(b)] $A_X$ is of type $D_7$ and $A_S$ is of type $E_8$,
\item[(c)] $A_X$ is of type $E_7$ and $A_S$ is of type $E_8$,
\item[(d)] $A_X$ is of type $D_{2k}$,
\item[(e)] $A_X$ is of type $H_3$ and $A_S$ is of type $H_4$. 
\end{itemize}
\item[(2)] If $A_X$ is reducible, $A_X$ is \emph{not} conjugacy stable in $A_S$ except in the following cases:
\begin{itemize}
\item[(a)] $A_S$ is of type $B_n$ ($n\geqslant 3$) and $A_X=A_{\{s_1\}}\times A_{Z}$, with $Z\subset \{s_3,\ldots,s_n\}$ and $A_{Z}$ irreducible. 
\item[(b)] $A_S$ is of type~$F_4$. 
%{\color{green}and $A_X=A_{X_1}\times A_{X_2}$ has two irreducible components, satisfying $X_1\subset \{s_1,s_2\}$ and $X_2\subset \{s_3,s_4\}$ Matt: I think this is automatic for irreducible $X$. Could remove.} 
\end{itemize}
\end{itemize}
\end{theorem}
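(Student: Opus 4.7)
The plan is to exploit the structure theory of normalizers of standard parabolic subgroups and the ribbon groupoid in spherical-type Artin-Tits groups, as developed by Paris and Godelle. Recall that the normalizer can be written $N_{A_S}(A_X) = A_X \cdot Q_X$, where $Q_X$ is the group of ribbons $\rho\in A_S$ satisfying $\rho^{-1} A_X \rho = A_X$; each ribbon induces a graph automorphism of $\Gamma_X$. The first step is a reduction: given $a, b \in A_X$ with $a^g = b$ for some $g \in A_S$, I would consider the parabolic closure $P$ of $a$ in $A_S$. Since $A_X$ is a parabolic subgroup containing $a$, we have $P \subseteq A_X$, and likewise $P^g \subseteq A_X$ is the parabolic closure of $b$. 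Using that two standard parabolic subgroups of $A_X$ which are $A_S$-conjugate are also $A_X$-conjugate (a statement that follows from the Coxeter-combinatorial description of parabolic conjugacy), I can modify $g$ by an element of $A_X$ to assume $g \in N_{A_S}(P)$; invoking the ribbon decomposition of $N_{A_S}(P)$ then reduces the whole question to whether the ribbons in $Q_X$ act on $A_X$-conjugacy classes trivially.

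For part (1), the second step is a finite case analysis: for each pair $(\Gamma_X, \Gamma_S)$ with $\Gamma_X$ connected, compute the image of $Q_X$ in $\mathrm{Aut}(\Gamma_X)$ and check whether the resulting graph automorphisms preserve $A_X$-conjugacy classes. Here $Q_X$ admits a combinatorial description through minimal length double coset representatives in $W_S$. The critical input is the following: the non-trivial involution of $\Gamma_{D_{2k}}$ exchanging its two terminal vertices induces an outer automorphism of $A_{D_{2k}}$ that does not preserve conjugacy classes (distinguished, for example, through the abelianization), while the analogous involution of $\Gamma_{D_{2k+1}}$ does preserve them, being realised by conjugation by the central element; this explains case (d). Cases (a), (b), (c), (e) correspond to the precise pairs in the sporadic types where a non-trivial graph-automorphism-inducing ribbon exists; each is certified either by direct computation of $Q_X$ or, in the negative direction, by exhibiting explicit witnesses: pairs of elements of $A_X$ that are ribbon-conjugate in $A_S$ but distinguished by a conjugacy invariant of $A_X$ such as the image of a suitable central power in the abelianization.

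For part (2) the argument has a different flavour: given a decomposition $A_X = A_{X_1} \times \cdots \times A_{X_r}$ with $r \geq 2$, one generically finds elements of $A_S$ that braid together two components through intermediate generators in $S \setminus X$, producing elements of $A_X$ that become conjugate in $A_S$ but not in $A_X$. The only way this can fail is when every path in $\Gamma_S$ linking two components of $\Gamma_X$ passes through edges of label $\geq 4$, producing a rigidity that should account for the two exceptional configurations, namely $B_n$ with $X = \{s_1\} \sqcup Z$ and $F_4$. Positive conjugacy stability in these cases would be established through an abelianization argument, distinguishing elements of $A_X$ via their images in $A_S/[A_S,A_S]$, which must coincide for any $A_S$-conjugate pair. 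The main obstacle I expect is the explicit case-by-case verification in the second step, particularly for the sporadic exceptional types $E_7$ and $E_8$, where one must enumerate many embeddings of irreducible parabolics and compute their ribbons; this is likely to necessitate systematic combinatorial computation rather than any single conceptual idea.
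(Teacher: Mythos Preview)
Your overall architecture---parabolic closure followed by ribbon/normalizer analysis---is close to the paper's, but the reduction step has a genuine gap. You assert that two standard parabolic subgroups of $A_X$ which are $A_S$-conjugate are automatically $A_X$-conjugate; this is false, and it fails in precisely the non-stable cases. For instance, with $A_X$ of type $D_{2k}$ inside $A_S$ of type $D_{2k+1}$, the standard parabolics $A_{\{x_1,x_3,\ldots,x_{2k}\}}$ and $A_{\{x_2,x_3,\ldots,x_{2k}\}}$ are conjugate in $A_S$ but not in $A_X$, and the paper's counterexamples in cases (a)--(c) are of the same nature, all built from \emph{proper} sub-parabolics of $A_X$. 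A related problem: even granting your step, you would land in $N_{A_S}(P)=P\cdot Q_P$ for the parabolic closure $P$ of $a$, yet you then analyse only $Q_X$, the quasi-centralizer of $A_X$ itself. Controlling $Q_X$ (equivalently, showing that $N_{A_S}(A_X)$ acts on $A_X$ by inner automorphisms) is strictly weaker than conjugacy stability; one must control $Q_{A_Y}$ relative to $A_X$ for \emph{every} $Y\subset X$.

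The paper's fix is to encode all of this in a single Coxeter-level condition: Property~$\star_W$ asks that for \emph{every} $Y_1,Y_2\subset X$ and $w\in W_S$ with $Y_1^w=Y_2$, some $v\in W_X$ satisfies $y^v=y^w$ pointwise. This is lifted to the Artin level via the section $\tau$ and Matsumoto's lemma, and combined with parabolic closure plus Godelle's decomposition to yield conjugacy stability; checking $\star_W$ is then a finite Coxeter computation using Geck--Pfeiffer's tables of Coxeter classes. Finally, your abelianization argument for the positive direction in part~(2) is insufficient: the image in $A_S^{\mathrm{ab}}$ records only exponent sums and cannot certify that an arbitrary $A_S$-conjugate pair in $A_X$ is already $A_X$-conjugate. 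The paper instead verifies Property~$\star_W$ directly for the exceptional reducible configurations.
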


\noindent
Gonz\'alez-Meneses' proof in the specific case of braids relies heavily on the identification between braids and mapping classes of punctured disks: Birman-Lubotzky-McCarthy's {\it Canonical Reduction Systems} of mapping classes play a fundamental role.  Although more combinatorial in spirit, our approach was inspired by Gonz\'alez-Meneses': instead of the Canonical Reduction System, we use the {\it parabolic closure} of elements of Artin-Tits groups of spherical type introduced recently in \citep{CGGW}; see \autoref{T1}.

\medskip\noindent
The first main tool we will use are ribbons. These objects are highly useful when conjugating parabolic subgroups and we introduce them in \myref{preliminaries}{Section}. The other main result consists in making depend conjugacy stability of standard parabolic subgroups on a special property that we will call Property~$\star$. This property and its implications will be explained in \myref{s:prop}{Section}. Finally, in \myref{s:proof}{Section} we finish the proof of \autoref{mainT}.

\section{Garside elements and ribbons}\label{preliminaries}

Given a group $G$ and $g,x\in G$, we denote
by $x^g=g^{-1}xg$ the conjugate of $x$ by~$g$; this defines a right-action of $G$ on itself. 
In the same way, for $g\in G$ and a subset $H$ of $G$, we denote by $H^g$ the set of $g$-conjugates of elements of $H$.

\medskip\noindent
For the remainder of the present section, we fix an irreducible Artin-Tits group of spherical type $A_S$. The monoid $A_S^+$ consisting of \emph{positive} elements (which can be written as words on $S$ with only positive exponents) is a \emph{Garside monoid} (see \cite{BS,DP}): this involves, among other things, a \emph{fundamental} or \emph{Garside element} which we denote by $\Delta_S$ (for $X\subset S$, the Garside element of $A_X$ will be denoted by $\Delta_X$).

\medskip\noindent
%Among all Artin-Tits groups, those of spherical type are characterized by the existence of a finite-type Garside structure --\citep{BS,DP}.
%The \emph{classical Garside structure} of $A_S$ consists of 2 pieces of data: the \emph{positive monoid} $A_S^{+}$ (those elements of $A_S$ which can be written as words on $S$ with only positive exponents) and a \emph{fundamental (Garside) element} $\Delta_S\in A_S^{+}$ whose positive prefixes/suffixes form a lattice generating set of the group. If $X\subset S$, we will denoted the Garside element of the standard parabolic subgroup $A_X$ by $\Delta_X$. 

\medskip\noindent
{\bf Example.}
In the braid group on $n+1$ strands (Artin-Tits group of type $A_n$), the Garside element is $s_1(s_2s_1)(s_3s_2s_1)\cdots (s_ns_{n-1}\cdots s_1)$; it can be seen as a half-twist of the trivial braid on $n+1$ strands.

\medskip\noindent
Although the paper builds on previous works which use in a crucial way the Garside structure of~$A_S$, our arguments do not directly involve this structure so we only record some useful properties of the Garside element~$\Delta_S$.

\medskip\noindent
It is known that conjugation by~$\Delta_S$ is an involution and that $S^{\Delta_S}=S$. Moreover, $\Delta_S$ is central if $A_S$ is of type $A_1$, $B_n$, $D_n$ ($n$ even), $E_7$, $E_8$, $F_4$, $H_3$, $H_4$ or $I_{2m}$ ($m$ even) \citep{BS,Deligne}. \autoref{Table1} synthesizes the conjugacy action by $\Delta_S$ in the other irreducible cases. In each of the cases considered in \autoref{Table1}, we call \emph{flip automorphism} the inner automorphism $s\mapsto s^{\Delta_S}$ of~$A_S$. For more information about the specific construction of~$\Delta_S$, see \citep{BS}.

\begin{table}[h]
\begin{center}
\begin{tabular}{|c|c|c|c|c|}
\hline
$A_S$ & $A_n$ ($n\geqslant 2$) & $D_n$ ($n$ odd) & $E_6$ & $I_{2m}$ ($m$ odd) \\
\hline
$s_i^{\Delta_S}$ & $s_{n-i+1}$, $1\leqslant i \leqslant n$ & 
 $\begin{cases}
 s_2 & i=1 \\
 s_1 & i=2 \\
s_i & 3\leqslant i\leqslant n
\end{cases}$
 &  $\begin{cases}
 s_6 & i=1\\
 s_2 & i=2 \\
s_5 & i=3\\
s_4 & i=4 \\
s_3 & i=5 \\
s_1 & i=6\\
\end{cases}$ & 
$ s_{{(i+1)} \ mod\ 2}$
  \\
 \hline
\end{tabular}
\caption{Conjugation by the special Garside element $\Delta_S$.}\label{Table1}
\end{center}
\end{table}

\medskip
\noindent
We are now able to define ribbons. Note that the following definition is slightly different from the original definition of ribbon from \citep{Godelle} based upon \citep{ParisParabolicos}.

\begin{definition}\cite[Definition~4.1]{CGGW}
Let~$A_S$ be an Artin--Tits group of spherical type.
Given $X\subseteq S$ and~$t\in S$, we define the positive element
$$
  r(t,X)=\Delta_{X}^{-1}\Delta_{X\cup\{t\}}
$$ and we call it a \emph{ribbon}. If moreover $t$ is adjacent to $X$ in the Coxeter graph $\Gamma_S$, we say that 
$r(t,X)$ is an \emph{adjacent ribbon}.
\end{definition}

\begin{remark}
Notice that $r(t,X)$ conjugates $X$ to some subset ${X'}$ of ${X\cup\{t\}}$ and $X'=X^{\Delta_{X\cup\{t\}}}$. 
\end{remark}

\bigskip\noindent
The forthcoming proofs use a weak version of a result from \citep{AntolinCumplido}. The \emph{support} of a positive element $g$ of $A_S$ is defined as 
$$Supp(g)=\{s\in S, s\ {\text{appears in any positive word on $S$ representing $g$}}\}.$$ 

\begin{lemma}[{\citealp[Lemma 21]{AntolinCumplido}}]\label{counterexamples}
Let $g,h$ be positive elements of an Artin--Tits group $A_S$ of spherical type such that  $Supp(g)=Y\subsetneq S$ and $Supp(h)=Z\subsetneq S$. If~$g$ and~$h$ are conjugate in $A_S$, then there are 
subsets $Y=Y_0,\ldots,Y_{n}=Z$ of $S$ and adjacent ribbons $r(t_i,Y_{i-1})$ ($i=1,\ldots,n$) conjugating ${Y_{i-1}}$ to ${Y_i}$.
\end{lemma}

%\begin{remark}\label{resultribbons}
%The strong version of \autoref{counterexamples} says that any two parabolic subgroups $A_X$, $A_{X'}$ are conjugate in an Artin--Tits group of spherical type, we can find a conjugating element of the form $v=\alpha\beta$ where $\alpha\in A_X$ and $\beta$ is a composition of adjacent ribbons sending $X$ to ${X'}$. If one wants $v$ to induce a graph automorphism, then $\alpha$ has to be either the identity or an element inducing flip homomorphisms in some connected components of $\Gamma_{X}$, that is, the product of the special Garside elements corresponding to the flipped components. 
%\end{remark}

\section{The Property~$\star$}\label{s:prop}

\noindent
In this section we introduce our Property $\star$ and we show its sufficiency for conjugacy stability, in the spherical case. 
In a second step, we show that Property $\star$ holds in several cases. 
%Fix an Artin-Tits system $(A_S,S)$ of spherical type with Coxeter graph $\Gamma=\Gamma_S$ and $\emptyset\neq X\subsetneq S$. 

\begin{definition}
Let $(A_S,S)$ be an Artin-Tits system (of spherical type) and let $\emptyset\neq X\subsetneq S$
We say that $(A_X,A_S)$ \emph{satisfies Property $\star$}
if for all $Y_1,Y_2\subset X$, and $g\in A_S$ such that ${Y_1}^g = {Y_2}$, there exists $h\in A_X$ such that $s^h = s^g$ for all $s \in Y_1$.
\end{definition}

\begin{proposition}\label{P:Star2}
Let $(A_S,S)$ be an Artin-Tits system of spherical type and let $\emptyset \neq X\subsetneq S$. 
If $(A_X,A_S)$ has Property~$\star$, then $A_X$ is conjugacy stable in~$A_S$. 
\end{proposition}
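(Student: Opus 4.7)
The plan is to apply Property~$\star_A$ after standardizing both the elements $a, b$ and the conjugator $g$ using the parabolic closure from Theorem~\ref{T1}; denote the parabolic closure of $x \in A_S$ by $P_c(x)$.

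Let $a, b \in A_X$ satisfy $b = a^g$ for some $g \in A_S$. By Theorem~\ref{T1}, $P_c(b) = P_c(a)^g$; since $a, b \in A_X$, both closures lie in $A_X$, and, as $A_X$ is itself a spherical Artin-Tits group, they are parabolic subgroups of $A_X$. The theory of \cite{CGGMW} then provides $\alpha, \beta \in A_X$ and $Y_1, Y_2 \subseteq X$ with $P_c(a)^\alpha = A_{Y_1}$ and $P_c(b)^\beta = A_{Y_2}$. Setting $a_0 := a^\alpha \in A_{Y_1}$ and $b_0 := b^\beta \in A_{Y_2}$, the task reduces to showing that $a_0$ and $b_0$ are $A_X$-conjugate. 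The transported conjugator $\gamma := \alpha^{-1} g \beta$ satisfies $a_0^\gamma = b_0$ and, by equivariance of the parabolic closure, $A_{Y_1}^\gamma = A_{Y_2}$.

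The crux of the argument is a factorization $\gamma = c \cdot r$ with $c \in A_{Y_1}$ and $r \in A_S$ such that $\Sigma_{Y_1}^r = \Sigma_{Y_2}$. At the Coxeter level, the classical theory of distinguished coset representatives for standard parabolic subgroups (\cite[Section~2.3]{GP}) decomposes $\theta(\gamma) = u \cdot v$ with $u \in W_{Y_1}$ and $Y_1^v = Y_2$, and Lemma~\ref{LMatsumoto} ensures $\Sigma_{Y_1}^{\tau(v)} = \Sigma_{Y_2}$. Lifting this Coxeter-level decomposition to an equality in $A_S$ is the delicate point, since $\tau$ is not a homomorphism: the discrepancy between $\tau(u)\tau(v)$ and $\gamma$ lies in the kernel $CA_S$ of $\theta$, and one must use structural information about the normalizer $N_{A_S}(A_{Y_1})$ together with Paris-type rigidity for spherical Artin-Tits groups to show that this discrepancy can be absorbed into the factor $c$ (and a centralizer-of-$A_{Y_1}$ correction to the factor $r$), yielding the required $c$ and $r$.

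With $\gamma = c r$ in hand, set $a_1 := a_0^c \in A_{Y_1}$. Then $a_1$ is $A_X$-conjugate to $a$, and $a_1^r = a_0^{c r} = a_0^\gamma = b_0$. Since $\Sigma_{Y_1}^r = \Sigma_{Y_2}$, Property~$\star_A$ produces $h_0 \in A_X$ with $\sigma_y^{h_0} = \sigma_y^r$ for every $y \in Y_1$; hence $h_0$ and $r$ act identically by conjugation on all of $A_{Y_1}$, and in particular $a_1^{h_0} = b_0$. Pulling back through the preceding $A_X$-conjugations yields an element $h \in A_X$ with $a^h = b$, as required. The main obstacle throughout is the Artin-level factorization of $\gamma$ above.
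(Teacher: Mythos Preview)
Your overall architecture matches the paper's proof exactly: standardize the parabolic closures of $a$ and $b$ inside $A_X$, obtain a conjugator $\gamma$ in $A_S$ between the two resulting standard parabolics $A_{Y_1}$ and $A_{Y_2}$, factor $\gamma$ so that one piece permutes $\Sigma_{Y_1}$ onto $\Sigma_{Y_2}$, replace that piece using Property~$\star_A$, and reassemble.

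The gap is precisely at what you yourself flag as ``the main obstacle'': the factorization $\gamma = c\cdot r$ with $c\in A_{Y_1}$ and $\Sigma_{Y_1}^{r}=\Sigma_{Y_2}$. Your proposed route---descend to the Coxeter group, take the distinguished coset decomposition, and then lift back through $\tau$---does not close, and you acknowledge this by appealing vaguely to ``Paris-type rigidity'' and ``structural information about the normalizer'' to absorb the $CA_S$-discrepancy. None of that is carried out, and it is genuinely nontrivial: $\tau$ is not a section of a split extension, so the error term $\gamma\,(\tau(u)\tau(v))^{-1}\in CA_S$ has no a priori reason to lie in (or be absorbable by) $A_{Y_1}$ times the centralizer of $A_{Y_1}$.

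The paper bypasses this entirely. The factorization you want is already a theorem at the Artin-Tits level: Godelle \cite[Proposition~2.1.(3)]{Godelle} shows that whenever $A_{Y_1}^{\gamma}=A_{Y_2}$ in a spherical-type Artin-Tits group, one can write $\gamma = u v$ with $\Sigma_{Y_1}^{u}=\Sigma_{Y_2}$ and $v\in A_{Y_2}$ (equivalently, after reindexing, $c\in A_{Y_1}$ followed by a set-permuting piece). With this result in hand, your paragraph beginning ``With $\gamma = c r$ in hand'' is correct and matches the paper's conclusion verbatim. So the fix is simply to replace your Coxeter-lift argument by a citation of Godelle's decomposition; the rest of your proof stands. (A minor point: the standardization of parabolic subgroups \emph{inside $A_X$} is \cite[Theorem~4]{cumplido} rather than \cite{CGGMW}.)
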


\medskip\noindent
Before proceeding to the proof, we recall the important and recently defined notion of \emph{parabolic closure} of elements of Artin-Tits groups of \emph{spherical type}:

\begin{theorem}[{\citealp[Section 7, Lemma 8.1]{CGGW}}]\label{T1} 
Let $(A_S,S)$ be an Artin-Tits system of spherical type. 
For each $a\in A_S$, there is a unique minimal (with respect to inclusion) parabolic subgroup $P_a$ of $A_S$ which contains~$a$; we call this subgroup \emph{the parabolic closure of $a$}. Furthermore, for $g\in A_S$, we have ${P_a}^g=P_{a^g}$.
%i.e {\color{red}$\rho_g(P_a) = P_{\rho_g(a)}$}.
\end{theorem}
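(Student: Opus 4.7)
Let $a, b \in A_X$ be conjugate in $A_S$ via some $g \in A_S$, i.e.\ $a^g = b$; the goal is to find $h \in A_X$ with $a^h = b$. The strategy is to locate $a$ and $b$ inside suitable standard parabolic subgroups of $A_X$ using the parabolic closure (Theorem \ref{T1}), and then to invoke Property $\star_A$ to replace the ambient conjugator by one inside $A_X$.

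First, consider the parabolic closures $P_a$ and $P_b$, which satisfy $P_a^g = P_b$ by Theorem \ref{T1}. Since $A_X$ is a parabolic subgroup containing $a$, minimality of $P_a$ forces $P_a \subseteq A_X$; similarly $P_b \subseteq A_X$. Using the fact (established in the framework surrounding Theorem \ref{T1} in \cite{CGGMW}) that a parabolic subgroup of $A_S$ contained in $A_X$ is automatically a parabolic subgroup of $A_X$, we write $P_a = A_{Y_1}^{u_1}$ and $P_b = A_{Y_2}^{u_2}$ with $Y_1, Y_2 \subseteq X$ and $u_1, u_2 \in A_X$. Replacing $a$ by $a^{u_1^{-1}}$ and $b$ by $b^{u_2^{-1}}$---conjugations within $A_X$, which do not affect our goal---we reduce to the case where $P_a = A_{Y_1}$ and $P_b = A_{Y_2}$ are standard parabolic subgroups. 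A new conjugator $g' := u_1 g u_2^{-1} \in A_S$ then satisfies $a^{g'} = b$, and Theorem \ref{T1} yields $A_{Y_1}^{g'} = P_a^{g'} = P_b = A_{Y_2}$.

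The crucial intermediate step is to upgrade $g'$ to a conjugator whose conjugation action permutes the standard generating sets, i.e.\ to find $g'' \in A_S$ with $\Sigma_{Y_1}^{g''} = \Sigma_{Y_2}$ while still $a^{g''} = b$. In spherical type, any conjugation between standard parabolic subgroups $A_{Y_1}$ and $A_{Y_2}$ can be realized by an element mapping $\Sigma_{Y_1}$ bijectively onto $\Sigma_{Y_2}$; moreover such a conjugator can be chosen to differ from $g'$ by an element centralizing $A_{Y_1}$, so that it still sends $a$ to $b$. This is a structural fact on normalizers of standard parabolic subgroups in spherical Artin-Tits groups (akin to ``ribbon'' results), and constitutes the principal technical obstacle in the proof.

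Finally, Property $\star_A$ applied to the subsets $Y_1, Y_2 \subseteq X$ and the element $g''$ produces $h \in A_X$ with $\sigma_y^h = \sigma_y^{g''}$ for every $y \in Y_1$. Consequently conjugation by $h$ and by $g''$ induce the same inner automorphism of $A_{Y_1}$; applied to $a \in A_{Y_1}$ this gives $a^h = a^{g''} = b$, which together with the earlier reductions inside $A_X$ shows that the original $a$ and $b$ are conjugate in $A_X$, as desired.
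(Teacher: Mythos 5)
Your proposal does not prove the statement at hand. The statement to be established is Theorem~\ref{T1}: for every element $a$ of a spherical-type Artin-Tits group $A_S$ there exists a \emph{unique minimal} parabolic subgroup $P_a$ containing $a$, and this assignment is equivariant under conjugation, $P_a^g = P_{a^g}$. What you have written instead is an argument that Property~$\star_A$ implies conjugacy stability of $A_X$ in $A_S$ --- that is, you have (re)proved Proposition~\ref{P:Star2} of the paper, and in doing so you \emph{use} Theorem~\ref{T1} as a black box (``consider the parabolic closures $P_a$ and $P_b$, which satisfy $P_a^g = P_b$ by Theorem~\ref{T1}''). Nowhere do you address existence of a minimal parabolic subgroup containing $a$ (which requires showing that the intersection of the parabolic subgroups containing $a$ is itself parabolic, or some equivalent argument --- this is the substantial content of Section~7 of \cite{CGGMW}, resting on the lattice structure of parabolic subgroups), nor uniqueness, nor the equivariance formula. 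Invoking the conclusion of a theorem cannot serve as its proof.

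For what it is worth, the argument you do give is essentially the paper's proof of Proposition~\ref{P:Star2}: standardize $P_a$ and $P_b$ inside $A_X$ via \cite[Theorem~4]{cumplido}, use Godelle's decomposition of a conjugator between standard parabolic subgroups as $uv$ with $\Sigma_{Y_a}^u=\Sigma_{Y_b}$ and $v\in A_{Y_b}$ (\cite[Proposition~2.1.(3)]{Godelle}), and then apply Property~$\star_A$. Your ``crucial intermediate step'' is exactly Godelle's result, though you leave it as an unproved structural assertion. But none of this bears on Theorem~\ref{T1} itself, which in the paper is quoted from \cite{CGGMW} rather than proved; if you were asked to supply a proof of it, you would need to engage with the construction of the parabolic closure, not with its applications.
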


\noindent
\emph{Proof of \autoref{P:Star2}}.
Let $a,b\in A_X$ and $c\in A_S$ satisfying $a^c=b$. According to \autoref{T1},  we have that ${P_a}^c = P_{a^c}=P_b$.  According to \cite[Theorem~3]{cumplido}, both subgroups $P_a$ and~$P_b$ of~$A_X$ can be \emph{standardized} inside $A_X$: i.e. there exist $\alpha,\beta\in A_X$ and subsets $Y_a,Y_b$ of~$X$ such that ${P_a}^{\alpha} = A_{Y_a}$ and ${P_b}^{\beta} = A_{Y_b}$. Notice that
$A_{Y_a}^{\alpha^{-1}c\beta}=A_{Y_b}$. 

\medskip\noindent 
By \citep[Proposition 2.1.(3)]{Godelle} we can find $u\in A_S$ with ${Y_a}^u={Y_b}$ and $v\in A_{Y_b}$, such that $\alpha^{-1}c\beta = uv$. By Property~$\star$, we can find $u'\in A_X$ such that $s^{u'} = s^u$ for every $s\in Y_a$. It follows that $s^{u'v} = s^{uv} = s^{\alpha^{-1}c\beta}$ for all $s\in Y_a$; therefore for any element $z\in A_{Y_a}$, we have $z^{u'v}=z^{\alpha^{-1}c\beta}$. Applying this to the particular element $a^{\alpha}\in A_{Y_a}$, we obtain $a^{\alpha u'v} = a^{\alpha\alpha^{-1}c\beta}=a^{c\beta}=b^{\beta}$. It follows that $b=a^{\alpha u' v\beta^{-1}}$, and we note that $\alpha u' v\beta^{-1}\in A_X$. \hfill $\square$

\medskip

\begin{remark}
{Given an Artin-Tits system $(A_S,S)$ of spherical type, Property $\star$ for the pair $(A_X,A_S)$ implies that the automorphisms of $A_X$ induced by conjugation by an element in the normalizer $N_{A_S}(A_X)$ are inner automorphisms of $A_X$. 
Indeed, we know \citep[Theorem 0.1]{Godelle} that $N_{A_S}(A_X)=A_X\cdot QZ_{A_S}(A_X)$ (where $QZ_{A_S}(A_X)=\{g\in A_S, \ X^g=X\}$); Property~$\star$ then says that for $g\in QZ_{A_S}(A_X)$, we can find $h\in A_X$ such that $x^g=x^h$ for all $x\in A_X$ and the claim follows. }
\end{remark}

\bigskip

%\subsection{Checking Property~$\star$}\label{star}
\noindent
Now, we will see that Property~$\star$ holds in some cases.  %We will see later that these are exactly the non-listed cases in \autoref{mainT}(1).
%Before we start doing this, we may notice that every automorphism of a Coxeter graph of spherical type is composed essentially of the three following operations: Applying a symmetry to a connected component (this is performed by a flip automorphism); permuting the connected components of the graph; translating a connected component without decreasing the total number of connected component of the graph. 

\medskip

\begin{lemma}\label{L:MoreThanStar}
\begin{itemize}
    \item[(i)] Let $A_X$ be an Artin--Tits group of type $A_n$, $E_6$ or $I_{2\cdot 5}$ and let $\Gamma_X$ be its defining Coxeter graph. Let $Y_1,Y_2\subset X$, let $\Gamma_{Y_1}$, $\Gamma_{Y_2}$ be the respective induced subgraphs of $\Gamma_X$ and let $\psi:\Gamma_{Y_1}\longrightarrow \Gamma_{Y_2}$ be an isomorphism of labeled graphs.
Then there exists $v\in A_X$ such that $\psi(y)=y^v$, for all $y\in Y_1$. 
\item[(ii)] Let $A_S$ be any Artin--Tits group, let $\emptyset\neq X\subsetneq S$ such that $A_X$ is of type $A_n$, $E_6$ or $I_{2\cdot 5}$. Then $(A_X,A_S)$ has Property~$\star$.
\end{itemize}
\end{lemma}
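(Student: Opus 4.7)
The plan is to deduce (ii) from (i) and then establish (i) by a case-by-case analysis on the type of $W_X$. For the implication (i) $\Rightarrow$ (ii): given $w\in W_S$ with $Y_1^w=Y_2 \subset X$, the assignment $y\mapsto y^w$ is a bijection $Y_1\to Y_2$ preserving the orders of pairwise products, since $y^w(y')^w=(yy')^w$ has the same order as $yy'$ in $W_S$, and for elements of $S$ this order is the corresponding Coxeter matrix entry. Hence $y\mapsto y^w$ defines an isomorphism of labeled graphs $\psi:\Gamma_{Y_1}\to \Gamma_{Y_2}$, and part (i) applied to $\psi$ supplies the required $v\in W_X$.

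For part (i), I would treat the three admissible types of $W_X$ separately. When $W_X$ is of type $A_n$, I would use the realization $W_X\cong \mathfrak{S}_{n+1}$ in which $s_i$ corresponds to the adjacent transposition $(i,i+1)$. A subset $Y\subset X$ then determines a partition of $\{1,\ldots,n+1\}$ into intervals (``blocks''), where $i$ and $i+1$ lie in the same block precisely when $s_i\in Y$. A labeled graph isomorphism $\psi:\Gamma_{Y_1}\to \Gamma_{Y_2}$ amounts to a size-preserving bijection between the non-singleton blocks of the two partitions, together with an orientation (order-preserving or order-reversing) for each matched pair. Any permutation $v\in \mathfrak{S}_{n+1}$ realizing this block-level data (and defined arbitrarily on the singleton blocks, whose counts necessarily agree) then satisfies $v s_y v^{-1}=s_{\psi(y)}$ for every $s_y\in Y_1$. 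When $W_X$ is of type $I_{2\cdot 5}$, the subsets of $X=\{s_1,s_2\}$ are few and the only nontrivial labeled graph isomorphism is the swap of $s_1$ and $s_2$, realized by the longest element $w_0(W_X)$ according to Table~\ref{Table1}.

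The case where $W_X$ is of type $E_6$ is the main obstacle and requires the most detailed work. I would proceed by enumerating the possible isomorphism classes of labeled subgraphs $\Gamma_Y$ for $Y\subset X$ (these range over subgraphs of type $A_k$, $D_k$, disjoint unions thereof, and the full $\Gamma_{E_6}$), and verifying in each case that every labeled graph isomorphism between two such subgraphs is induced by an inner automorphism of $W_{E_6}$. The basic ingredients are the flip $w_0(W_{E_6})$, which realizes the nontrivial $\mathbb{Z}/2$ automorphism of $\Gamma_{E_6}$ per Table~\ref{Table1}, together with longest-element flips in suitable sub-parabolic subgroups of $W_{E_6}$; the finite case analysis can be organized using the tables of Coxeter classes in \cite[Section 2.3]{GP}.
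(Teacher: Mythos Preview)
Your proposal is correct and follows essentially the same strategy as the paper: deduce (ii) from (i), handle $I_{2\cdot5}$ via the longest element, handle $A_n$ via the symmetric-group realization, and handle $E_6$ by a case analysis using Coxeter classes from \cite[Section~2.3]{GP} together with flips by longest elements of sub-parabolics.

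The one organizational difference worth noting is in the $E_6$ case. Rather than enumerating all isomorphism types of subgraphs directly, the paper first invokes \cite[Table~A.1]{GP} to conclude that $Y_1$ and $Y_2$ are conjugate as subsets, thereby reducing to the question of realizing every graph \emph{automorphism} of a single $\Gamma_{Y_2}$ by conjugation in $W_X$; it then observes that if $Y_2$ is conjugate into the $A_5$ branch $\{s_1,s_3,s_4,s_5,s_6\}$ the type-$A$ argument already applies. This leaves only five Coxeter classes to treat explicitly, which the paper dispatches in a short table. Your direct enumeration would work but involves considerably more cases; the paper's two-step reduction is what makes the $E_6$ verification compact.
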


\begin{proof}
Let us first prove (ii) as a consequence of (i). Let $Y_1,Y_2\subset X$ and $w\in A_S$ such that $Y_1^w=Y_2$; in particular, conjugation by~$w$ induces an isomorphism between the labeled graphs~$\Gamma_{Y_1}$ and~$\Gamma_{Y_2}$  and it follows from (i) that we can find $v\in A_X$ so that $y^v=y^w$, for all $y\in Y_1$. 

\medskip\noindent
Let us prove (i). Suppose first that $A_X$ is of type $I_{2\cdot 5}$. Write $\bar{s_i}=s_{(i+1)\ mod\ 2}$, for $i=1,2$. Then 
$\psi(y)=y$ for all $y\in Y_1$, or $\psi(y)=\bar y$ for all $y\in Y_1$. It then suffices to take $v$ to be the identity or $\Delta_X$, accordingly.

\medskip\noindent
Suppose that $A_X$ is of type~$A_n$. The graph $\Gamma_{Y_1}$ is a disjoint union of path graphs (possibly with a single vertex) and the graph isomorphism $\psi$: $\Gamma_{Y_1}\longrightarrow \Gamma_{Y_2}$ can be realized conjugating first by a product of ribbons (as in the example of \autoref{Figura_ribbons}) and then by a product of the Garside elements of some irreducible components of~$A_{Y_2}$. 

%
%
%Then $v=\alpha\beta$, where $\alpha$ is the product of the special Garside elements 
%corresponding to some connected component of $\Gamma_{X}$ and $\beta$ is a product of ribbons that can be seen as an element in the symmetric group with $n+1$ elements (see \autoref{resultribbons} and \autoref{Figura_ribbons}).
%

\begin{figure}[h]
\centering
\includegraphics[scale=0.6]{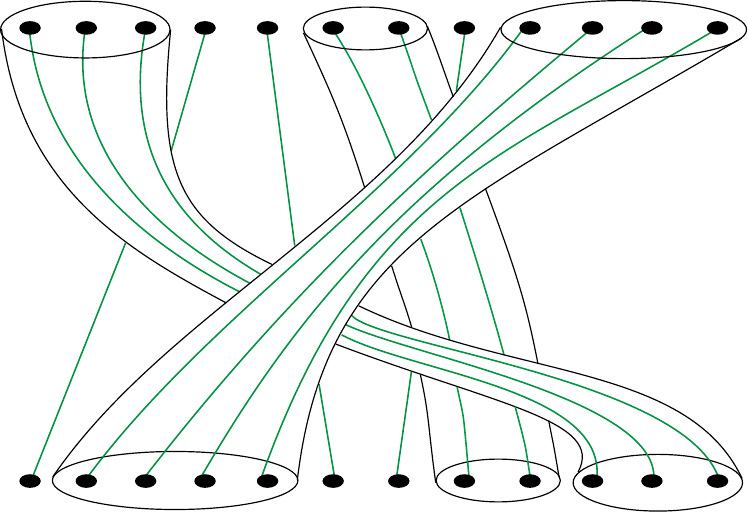}
\caption{In the braid group $\mathcal B_{12}$, a braid made of ribbons conjugating $Y_1=\{s_1,s_2\}\cup\{s_6\}\cup\{s_9,s_{10},s_{11}\}$ to $Y_2=\{s_2,s_3,s_4\}\cup\{s_8\}\cup\{s_{10},s_{11}\}$.}\label{Figura_ribbons}
\end{figure}

\medskip\noindent
Now suppose that~$A_X$ is of type~$E_6$. It can be checked that in this case, two standard parabolic subgroups $A_{Y_1}$ and $A_{Y_2}$ are isomorphic if and only if $Y_1$ and $Y_2$ are conjugate; therefore under our hypothesis $Y_1$ and $Y_2$ must be conjugate in $A_X$.

\medskip
\noindent
If $Y_1$ or $Y_2$ is a subset of $\{s_1,s_3,s_4,s_5,s_6\}$, we are back to the previous case, as $A_{\{s_1,s_3,s_4,s_5,s_6\}}$ is a braid group on 6 strands. 
In \autoref{Table2}, we list the conjugacy classes of subsets of $X$ with no representative in $\{s_1,s_3,s_4,s_5,s_6\}$. In each case, we see that every possible graph automorphism of $\Gamma_{Y_1}$ (the table considers generators of the automorphism group of $\Gamma_{Y_1}$) is induced by  conjugation by some $v\in A_X$, given explicitly in the last column. \end{proof}

% five cases and the corresponding form of~$v$ to induce the automorphism~$\psi$.
%%In the rest of the cases, the reader can check the one can find a ribbon $r=r(t,X)$ such that $t$ is adjacent to the connected component of $Y_1$ containing $s_2$ and $Y_1^r\subset \{s_1,s_3,s_4,s_5,s_6\}$. Then, the problem would reduce again to the braid case.
% Please add the following required packages to your document preamble:
% \usepackage{multirow}

\begin{table}[h]
 \caption{The conjugacy classes of subsets of $X$ with no representative lying in $\{s_1,s_3,s_4,s_5,s_6\}$.}\label{Table2}
\begin{center}
\resizebox{\textwidth}{!}{
\begin{tabular}{|c|c|c|c|}
\hline
$Y_1$ & Type of $Y_1$ & Automorphism of $\Gamma_{Y_1}$ & $v$\\
\hline	
$\{s_1,s_2,s_4,s_6\}$  & $A_1\times A_1\times A_2$  &  \begin{tabular}{c} flip on $A_2$-component \\ transposition of the $A_1$-components \end{tabular}&  \begin{tabular}{c} $\Delta_{\{s_2,s_4\}}$ \\ $\Delta_X$ \end{tabular}\\
\hline
$\{s_1,s_2,s_3,s_5,s_6\}$ & 
$A_1\times A_2\times A_2$ &
\begin{tabular}{c} flip on the first $A_2$-component \\ 
flip on the second $A_2$-component \\ 
transposition of the $A_2$-components\end{tabular} & 
\begin{tabular}{c} 
$\Delta_{\{s_1,s_3\}}$ \\  
$\Delta_{\{s_5,s_6\}}$\\  
$\Delta_X$ 
\end{tabular}\\
\hline
 $\{s_2,s_3,s_4,s_5\}$ & $D_4$ & 
 \begin{tabular}{c} transposition of the leaves $s_3,s_5$ of $\Gamma_{Y_1}$ \\  transposition of the leaves $s_2,s_5$ of $\Gamma_{Y_1}$ \end{tabular} & 
 \begin{tabular}{c}
$\Delta_X$\\  $\Delta_{\{s_1,s_2,s_3,s_4,s_5\}}$\\
\end{tabular}\\
\hline
 $\{s_1,s_2,s_4,s_5,s_6\}$ & $A_1\times A_4$ & 
flip on the $A_4$-component & $\Delta_{\{s_2,s_4,s_5,s_6\}}$ \\
\hline 
 $X$ & $E_6$ & flip of the whole $\Gamma_X$ & $\Delta_X$  \\
\hline
\end{tabular}}
\end{center}
\end{table}

\begin{remark}
Although this will not be used in the sequel, we note that the statement of \autoref{L:MoreThanStar} holds as well for $A_X$ of type $I_{2\cdot m}$, $m$ odd and $E_8$. 
\end{remark}

\begin{lemma}\label{L:BD}
Let $A_S$ be an irreducible Artin--Tits group of spherical type and let $\Gamma_S$ be its defining Coxeter graph. Let $\emptyset \neq X\subsetneq S$. Assume that: 
\begin{itemize}
    \item $A_X$ is of type $B_n$, or 
    \item $A_X$ is of type $D_n$ ($n$ odd) and $A_S$ is of type $D_m$ ($m>n$). 
    \end{itemize}
    Then $(A_X,A_S)$ satisfies Property $\star$. 
\end{lemma}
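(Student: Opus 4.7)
The plan is to establish Property~$\star_W$ in each of the two cases by a case analysis, exploiting at every step the fact that the labelled graph isomorphism $\psi:\Gamma_{Y_1}\to\Gamma_{Y_2}$ induced by conjugation by~$w$ must preserve the $W_S$-conjugacy classes of simple reflections. The aim is to reduce, in as many subcases as possible, to an application of Lemma~\ref{L:MoreThanStar}(i) on a type-$A$ subsystem of~$X$, while handling the few remaining "branch/sign-flip" contributions directly via elements of $W_X$ such as the longest element.

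For $W_X$ of type $B_n$, the ambient group $W_S$ is either of type $B_m$ ($m>n$) or equal to $F_4$ (with $n=3$). The key observation is that the distinguished generator $t\in X$ incident to the unique $4$-edge of $\Gamma_X$ lies in a $W_S$-conjugacy class of reflections that contains no other element of~$X$: in $B_m$ this is the class of sign-flips, and in $F_4$ the simple reflections split into the two classes $\{s_1,s_2\}$ and $\{s_3,s_4\}$ according to the odd-labelled connected components of the Coxeter graph, and a direct inspection of the two possible type-$B_3$ standard parabolics $\{s_1,s_2,s_3\}$ and $\{s_2,s_3,s_4\}$ confirms the claim in this case as well. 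Hence $t\in Y_1\Leftrightarrow t\in Y_2$, and $\psi(t)=t$ whenever $t\in Y_1$. If $t\notin Y_1$, both $Y_1$ and $Y_2$ lie in the type-$A_{n-1}$ standard parabolic $W_{X\setminus\{t\}}$, and Lemma~\ref{L:MoreThanStar}(i) applied inside this parabolic directly supplies the required $v\in W_X$. If $t\in Y_1$, the path shape of $\Gamma_X$ forces $\psi$ to fix also the unique $4$-neighbour of $t$ whenever it lies in $Y_1$, and the remaining part of $\psi$ restricts to a labelled graph isomorphism between subgraphs of a type-$A_{n-2}$ subsystem of~$X$; the element supplied by Lemma~\ref{L:MoreThanStar}(i) for this restriction commutes with $t$ (being supported on generators not adjacent to $t$) and hence is itself the required $v\in W_X$.

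For $W_X=D_n$ with $n$ odd inside $W_S=D_m$ ($m>n$), all simple reflections of $W_S$ are $W_S$-conjugate and the clean decoupling used above is unavailable. The new ingredient is that the flip automorphism of $W_X$ swapping the branch vertices $s_1$ and $s_2$ (cf.\ Table~\ref{Table1}) is realized by $w_0(W_X)\in W_X$. The approach is to examine the subcases according to $Y_1\cap\{s_1,s_2\}$ and whether $s_3\in Y_1$: in every case, the shape of $\Gamma_X$ together with the graph isomorphism $\psi$ forces either $\psi$ to fix $\{s_1,s_2\}\cap Y_1$ pointwise, or to swap $s_1$ and $s_2$ (the latter occurring only when both branch vertices lie in $Y_1$ and $\psi(s_1)=s_2$ is compatible with the rest), and both possibilities are realized in $W_X$ respectively by the identity and by $w_0(W_X)$. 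The restriction of $\psi$ to $Y_1\cap\{s_3,\ldots,s_n\}$ then lives in the type-$A_{n-2}$ parabolic $W_{\{s_3,\ldots,s_n\}}$ and is realized there by Lemma~\ref{L:MoreThanStar}(i). The main obstacle is to combine the branch contribution and the type-$A$ contribution into a single $v\in W_X$ realizing $\psi$ elementwise on every $y\in Y_1$: this works in every subcase because the type-$A$ element is supported on $\{s_3,\ldots,s_n\}$ and fixes $s_3$ (which is forced by the constraint that $s_3$, the unique vertex of degree $3$ in $\Gamma_X$, must be sent to itself whenever $s_3\in Y_1$), so it commutes with both $s_1$ and $s_2$, whose only connections to the tail pass through $s_3$; the product of the branch element and the type-$A$ element is then the required $v\in W_X$.
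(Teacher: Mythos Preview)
Your treatment of the $B_n$ case is essentially the paper's argument: the generator incident to the $4$-edge is isolated in its $W_S$-conjugacy class within $X$, so the connected component of $\Gamma_{Y_1}$ containing it is fixed pointwise by $\psi$, and the complement lies in a type-$A$ tail where Lemma~\ref{L:MoreThanStar}(i) applies. One small imprecision: when the fixed component is $\{s_1,\dots,s_k\}$ with $k\geqslant 2$, you should apply Lemma~\ref{L:MoreThanStar}(i) inside $W_{\{s_{k+2},\dots,s_n\}}$ (which genuinely contains $Y''_1$ and $Y''_2$), not merely inside the $A_{n-2}$ subsystem $\{s_3,\dots,s_n\}$; otherwise the element you obtain need not commute with $s_2,\dots,s_k$, only with $t$.

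The $D_n$ case, however, has a real gap. Your central claim---that $\psi$ must either fix $\{s_1,s_2\}\cap Y_1$ pointwise or swap $s_1$ and $s_2$---is false whenever $|Y_1\cap\{s_1,s_2\}|\leqslant 1$. For instance, take $n\geqslant 5$, $Y_1=\{s_1,s_3\}$ and $Y_2=\{s_4,s_5\}$: both are of type $A_2$ and are conjugate subsets of $S$ in $W_S=D_m$, yet $\psi(s_1)\in\{s_4,s_5\}$, which neither fixes $s_1$ nor swaps it with $s_2$. More generally, nothing forces a branch vertex to be sent to a branch vertex; your branch/tail decomposition of $Y_1$ is simply not respected by $\psi$. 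Consequently the restriction of $\psi$ to $Y_1\cap\{s_3,\dots,s_n\}$ need not land in $\{s_3,\dots,s_n\}$, and your appeal to Lemma~\ref{L:MoreThanStar}(i) on the tail breaks down. A secondary problem: even when $\psi(s_3)=s_3$, an element of $W_{\{s_3,\dots,s_n\}}$ centralizing $s_3$ need not commute with $s_1$ or $s_2$ (e.g.\ $s_3$ itself), so your final combination step is also unjustified as written.

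The paper's route is different and avoids this trap. When $Y_1$ and $Y_2$ each contain at most one of $s_1,s_2$, it uses the flip $w_0(W_X)$ to arrange that both lie in $\{s_1,s_3,\dots,s_n\}$---a type-$A_{n-1}$ subsystem of $X$, not $A_{n-2}$---and then invokes Lemma~\ref{L:MoreThanStar}(i) there in one stroke. The point you are missing is that one branch vertex together with the entire tail already forms a type-$A$ parabolic of $W_X$, so no separate ``branch contribution'' is needed in this case. Only when $\{s_1,s_2\}\subset Y_1$ (and hence, by \cite[2.3.11]{GP}, $\{s_1,s_2\}\subset Y_2$, with the containing components equal) does the paper split off the branch: the tail part now genuinely sits in $\{s_4,\dots,s_n\}$, the element produced there commutes with $s_1,s_2,s_3$, and it is combined with either the identity or $w_0(W_X)$ according to whether $s_1^w=s_1$ or $s_1^w=s_2$.
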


\begin{proof} Whenever $Z\subset S$, we write $\Gamma_Z$ for the subgraph of $\Gamma_S$ induced by $Z$. 
We fix once and for all $Y_1, Y_2\subset X$ and $w\in A_S$ such that $Y_1^w=Y_2$. 
Suppose first that~$A_X$ is of type~$B_n$; observe that~$A_S$ must be of type $F_4$ or $B_m$ ($m>n$) and that the first possibility might occur only if $n\leqslant 3$.
We give a detailed proof assuming that~$A_S$ is of type~$B_m$; the case~$A_S$ of type~$F_4$ can be dealt with in a similar fashion and is left as an exercise for the reader. 

\medskip\noindent
Given any subset $Z\subset S$, we denote by~$Z'$ the set of vertices of the connected component of~$\Gamma_Z$ containing $s_1$ and we set $Z'=\emptyset$ if $s_1\notin Z$; we also denote $Z''=Z\setminus Z'$. 
Notice that the conjugation by~$w$ induces an isomorphism between the Coxeter graphs~$\Gamma_{Y_1}$ and~$\Gamma_{Y_2}$. 
Then $Y'_1=Y'_2$ with $y^w=y$ for all $y\in Y'_1$ (due to the defining relations of~$A_S$) and the graphs~$\Gamma_{Y''_1}$ and~$\Gamma_{Y''_2}$ have to be isomorphic. If $Y_1''$ is empty, we can replace $w$ by the trivial element of $A_X$ and we are done. Otherwise observe that ${Y''_1}$ and~${Y''_2}$ are subsets of $\{s_2,\ldots, s_n\}$, which generates a braid group on $n$ strands. By applying \autoref{L:MoreThanStar}, we can find $w'\in A_{\{s_2,\ldots,s_n\}}\subset A_X$ performing the same conjugation as~$w$ on~$Y_1$ (if $Y'_1\neq \emptyset$, we can choose $w'\in A_{\{s_3,\ldots,s_n\}}$ commuting with $Y'_1$).

\medskip\noindent
Suppose now that~$A_X$ is of type~$D_n$ ($n$ odd) and that~$A_S$ is of type $D_m$ (${m>n}$). 
Recall (\autoref{Table1}) that conjugation by $\Delta_X$ leaves invariant $s_3,\ldots, s_n$ and permutes~$s_1$ and~$s_2$. If each of the chosen subsets~$Y_1$ and~$Y_2$  contains at most one of $s_1,s_2$, up to replacing one of $Y_i$ ($i=1,2$) by~
$Y_i^{\Delta_X}$, we may assume that both $Y_1, Y_2$ are subsets of $\{s_1,s_3,\ldots, s_n\}$; the latter set defines a braid group of type~$A_{n-1}$ and  \autoref{L:MoreThanStar} allows us to conclude.

\medskip\noindent
Suppose that~$Y_1$ contains both $s_1$ and $s_2$. Then $Y_2$ has to contain also both~$s_1$ and~$s_2$. To see this, observe that the only ribbon 
adjacent to $\{s_1,s_2\}$ is $s_3s_1s_2s_3$ which conjugates~$s_1$ to~$s_2$ and~$s_2$ to~$s_1$ and apply \autoref{counterexamples}: it follows that~$s_1$ and~$s_2$ can be simultaneously conjugated in~$A_S$ to letters in~$S$ only if they are fixed or permuted with each other. As for type~$B$, denoting by~$Y'_i$ ($i=1,2$) the set of vertices of the (union of the) connected component(s) of~$\Gamma_{Y_i}$ containing~$s_1$ and~$s_2$, we obtain $Y'_1=Y'_2$. We also see that the $Y''_i=Y_i\setminus Y'_i$ define isomorphic subgroups of the braid group $A_{\{s_4,\ldots, s_n\}}$, and we conclude as in type~$B$ case using \autoref{L:MoreThanStar} again.
%
%If $\Gamma_{Y_1}$ has a connected component of type~$D$ including the vertices $\{s_1,s_2,s_3,s_4\}$, then $\Gamma_{Y_2}$ has to have the same connected component in order to have an isomorphism. If $\Gamma_{Y_1}$ has a connected component of type $A_3$ having as vertices $\{s_1,s_2,s_3\}$. The only adjacent ribbon $r(t, \{s_1,s_2,s_3\})$ fixes $\{s_1,s_2,s_3\}$, hence, by \autoref{counterexamples}, $\Gamma_{Y_2}$ has to have the same connected component also in this case. Now let $s_1,s_2\in Y_1$ and $s_3\notin Y_1$.  Suppose that neither~$s_1$ nor~$s_2$ can be conjugate to $Y_1\setminus \{s_1,s_2\}$. Then, also  $s_1,s_2\in Y_1$ and $s_3\notin Y_1$. In the three previous cases, the components containing~$s_1$ and~$s_2$ are exactly the same for $Y_1$ and $Y_2$. Therefore, one can follow the same reasoning as for type~$B_m$ and have Property~$\star$.
%
%\medskip\noindent
%Finally, let $s_1,s_2\in Y_1$ and $s_3\notin Y_1$ and suppose that $s_1$ (or $s_2$) can be conjugate to an element $s'\in Y_1\setminus \{s_1,s_2\}$. Since $s_1$ and $s'$ lie in a braid group of type $A_{n-1}$, we know by \autoref{L:MoreThanStar} that this conjugation can be made inside~$A_X$. Therefore, up to conjugation in $A_X$ we can suppose that both $Y_1$ and $Y_2$ lie in the same braid group and again by \autoref{L:MoreThanStar} we will have Property~$\star$.
\end{proof}

\begin{remark}
It is easily seen that the statement of \autoref{L:MoreThanStar} is not true if $A_X$ is of type $B$ or $D$. If $A_X$ is of type $B$, it suffices to consider $Y_1=Y_2=\{s_1,s_2\}$ and the automorphism~$\psi$ of~$\Gamma_{Y_1}$ which permutes its two vertices; it can be checked that $s_1$ and $s_2$ are not conjugate in~$A_X$ (see \autoref{L:Odd}), so $\psi$ fails to be induced by an inner automorphism of~$A_X$. If~$A_X$ has type~$D$, choosing $Y_1=\{s_1,s_2\}$ and $Y_2=\{s_1,s_4\}$, we've just seen in the above proof that the graph isomorphism $\Gamma_{Y_1}\longrightarrow \Gamma_{Y_2}$ given by $s_1\mapsto s_1$ and $s_2\mapsto s_4$ is not induced by any inner automorphism of~$A_X$. 
\end{remark}

\section[Proof of Theorem~1]{Proof of \autoref{mainT}}\label{s:proof}

\subsection{Irreducible case}

Let $A_S$ be an irreducible Artin-Tits group of spherical type and let $\Gamma_S$ be its defining Coxeter graph. Vertices of $\Gamma_S$ are numbered $s_1,\ldots s_{\#S}$, according to \autoref{F:Coxeter}. Let $\emptyset\neq X\subsetneq S$ such that $A_X$ is irreducible.  First, we observe that, as~$A_X$ is a {proper} subgroup of~$A_S$, it cannot be of type $E_8,F_4,H_4$ or $I_{2m}$, $m>5$. 

\medskip\noindent
Suppose that the pair $(A_X,A_S)$ does not satisfy any of the conditions (a) to (e) of \autoref{mainT}(1). The group $A_X$ cannot be either of type $E_7,D_{2k}$ or $H_3$; otherwise $(A_X,A_S)$ would satisfy either (c) (d) or (e). Finally, $A_X$ can be of type $D_5$ ($D_7$, respectively) only if~$A_S$ is of type~$D_n$, $n\geqslant 6$, ($n\geqslant 8$, respectively); otherwise $(A_X,A_S)$ would satisfy (a) or (b). Then \autoref{P:Star2}, \autoref{L:MoreThanStar} and \autoref{L:BD} show that $A_X$ is conjugacy stable in $A_S$, as desired.

\medskip\noindent
Therefore, to prove the first part of \autoref{mainT}, one has to prove that $A_X$ is not conjugacy stable in~$A_S$ whenever $(A_X,A_S)$ satisfies one of the conditions (a) to (e). 
\autoref{counterexamples} will be the main tool to provide counterexamples. In each case (a) to (e), we shall exhibit two elements of $A_X$ which are conjugate in~$A_S$ but not in $A_X$. 
%We are going to consider pairs $(\Gamma_X,\Gamma_S)$ of Coxeter graphs with a given explicit type, where $\Gamma_X$ is an induced subgraph of $\Gamma_S$. Vertices of $\Gamma_S$ are numbered according to \autoref{F:Coxeter}. 

\medskip\noindent
Let $\Gamma_X$ be the defining Coxeter graph of $A_X$ and number the elements of $X$ $x_1,\ldots, x_{\#X}$, according to  \autoref{F:Coxeter}. 
Notice that there might be different ways to embed~$\Gamma_X$ as an induced subgraph of~$\Gamma_S$; following our notation, this is to say that a given~$x_i$ may be 
equal to distinct~$s_j$'s, depending on the chosen embedding of $\Gamma_X$ in~$\Gamma_S$.

\begin{itemize}
\item[(a)] Suppose that $A_X$ is of type $D_5$ and $A_S$ is of type $E_6,E_7$ or 
$E_8$. 
There are 4 different embeddings $\iota: \Gamma_X\hookrightarrow \Gamma_S$, namely:
\begin{center}
$\iota_1 : x_1=s_2$, $x_2=s_3$, $x_3=s_4$, $x_4=s_5$, $x_5=s_6$, \\
$\iota_2 :x_1=s_3$, $x_2=s_2$, $x_3=s_4$, $x_4=s_5$, $x_5=s_6$,\\
$\iota_3: x_1=s_5$, $x_2=s_2$, $x_3=s_4$, $x_4=s_3$, $x_5=s_1$,\\
$\iota_4:x_1=s_2$, $x_2=s_5$, $x_3=s_4$, $x_4=s_3$, $x_5=s_1$.
\end{center}
However to pass from one to another it suffices to pre- or post-compose by graph automorphisms which are induced by conjugation by $\Delta_X$ or $\Delta_{\{s_1,s_2,s_3,s_4,s_5,s_6\}}$ respectively. Therefore it is enough to give a pair of non-conjugate elements of $A_X$ whose images under~$\iota_1$ are conjugate elements of $A_S$. 

Take $g=\iota_1(x_1x_3x_2)=s_2s_4s_3$ and $h= \iota_1(x_4x_3x_2)=s_5s_4s_3$. 
%; let $Y=Supp(g)$ and $Z=Supp(h)$. We then have $Y=\{s_2, s_3, s_4\}$ and $Z=\{s_3,s_4,s_5\}$. 
The following product of ribbons (each arrow indicates the conjugation by its label) conjugates $g$ to $h$ in $A_S$:

\smallskip

\begin{center}
\tikzset{main node/.style={rectangle,rounded corners=.8ex,draw,minimum size=0.8cm,inner sep=0pt},}
 \begin{tikzpicture}
    \tikzstyle{flecha}=[->, thick,>=latex]
    \node[main node] (1) {$\,Y=\{s_2,s_3,s_4\}\,$};
    \node[main node] (2) [right = 1.8cm  of 1]  {$\,{Y_2}=\{s_1,s_3,s_4\}\,$};
    \node[main node] (3) [right = 1.8cm  of 2] {$\,{Z}=\{s_3,s_4,s_5\}\,$};

    \draw[flecha] (1)to[left] node[above]{\small $r( s_1,Y)$}(2);
    \draw[flecha] (2)to[left] node[above]{$r(s_5,Y_2)$}(3);
   %\draw[flecha] (3)to[left] node[above]{$r(s_6,Y_3)$}(4);

\end{tikzpicture}
\end{center}

\smallskip

However, the only vertex in $\Gamma_X$ which is adjacent to $\{x_1,x_3,x_2\}$
 is $x_4$ and we observe that $r(x_4,\{x_1,x_3,x_2\})=x_4x_3x_1x_2x_3x_4$ normalizes~$A_{\{x_1,x_3,x_2\}}$. 
 Therefore, by \autoref{counterexamples}, $x_1x_3x_2$ and $x_4x_3x_2$ cannot be conjugate in~$A_X$. 
 
\medskip

\item[(b)] Suppose that $A_X$ is of type $D_7$ and $A_S$ is of type $E_8$. 
There is only one induced subgraph of type $D_7$ in $\Gamma_S$ and two ways of embedding it: 
\begin{center}
$\iota_1: x_1=s_2$, $x_2=s_3$, $x_3=s_4$, $x_4=s_5$, $x_5=s_6$, $x_6=s_7$, $x_7=s_8$, \\
$\iota_2: x_1=s_3$, $x_2=s_2$, $x_3=s_4$, $x_4=s_5$, $x_5=s_6$, $x_6=s_7$, $x_7=s_8$,
\end{center}

which differ by precomposing by the graph automorphism of~$\Gamma_X$ induced by conjugation by~$\Delta_X$. 

Take $g=\iota_1(x_1 x_3 x_2)=s_2s_4s_3$ and $h= \iota_1(x_4x_3x_2)=s_5s_4s_3$. We conclude exactly in the same way as in (a): $g$ and $h$ are conjugate in $A_S$ but the only vertex $t$ of $\Gamma_X$ which is adjacent to $\{x_1,x_3,x_2\}$ produces an adjacent ribbon 
$r(t,\{x_1,x_3,x_2\})$ which normalizes~$A_{\{x_1,x_3,x_2\}}$. Therefore by \autoref{counterexamples}, $x_1x_3x_2$ and $x_4x_3x_2$ are not conjugate in $A_X$. 

\medskip

\item[(c)] Suppose that $A_X$ is of type~$E_7$ and~$A_S$ is of type~$E_8$. 
We must have $x_i=s_i$ for all $1\leqslant i\leqslant 7$.  
Take $g=s_1 s_3 s_4 s_5s_6$ and $h= s_2s_4s_5s_6s_7$.
%We have $Y=\{s_1, s_3, s_4,s_5,s_6\}$ and $Z=\{s_2,s_4,s_5,s_6,s_7\}$. 
The following product of ribbons conjugates $g$ to $h$ in~$A_S$: 

\smallskip

\begin{center}
\tikzset{main node/.style={rectangle,rounded corners=.8ex,draw,minimum size=0.8cm,inner sep=0pt},}
 \begin{tikzpicture}
    \tikzstyle{flecha}=[->, thick,>=latex]
    \node[main node] (1) {$\,Y=\{s_1, s_3, s_4,s_5,s_6\}\,$};
    \node[main node] (2) [below = 0.8cm  of 1]  {$\,{Y_2}=\{s_3,s_4,s_5,s_6,s_7\}\,$};
    \node[main node] (3) [right = 1.8cm  of 2] {$\,{Y_3}=\{s_4,s_5,s_6,s_7,s_8\}\,$};
    \node[main node] (4) [above = 0.8cm  of 3] {$\,Z=\{s_2,s_4,s_5,s_6,s_7\}\,$};

    \draw[flecha] (1)to[left] node[left]{ $r(s_7,Y)$}(2);
    \draw[flecha] (2)to[left] node[above]{$r(s_8,Y_2)$}(3);
    \draw[flecha] (3)to[left] node[right]{$r(s_2,Y_3)$}(4);

\end{tikzpicture}
\end{center}

\smallskip

However, a conjugation in $A_X$ by a sequence of adjacent ribbons never takes $\{x_1,x_3,x_4,x_5,x_6\}$ to $\{x_2,x_4,x_5,x_6,x_7\}$, as shows the following
picture: 

\smallskip

\begin{center}
\tikzset{main node/.style={rectangle,rounded corners=.8ex,draw,minimum size=0.8cm,inner sep=0pt},}
 \begin{tikzpicture}
    \tikzstyle{flecha}=[->, thick,>=latex]
    \node[main node] (2) {$\,Y=\{x_1,x_3,x_4,x_5,x_6\}\,$};
    \node[main node] (3) [right = 1cm  of 2] {$\,{Y_2}=\{x_3,x_4,x_5,x_6,x_7\}\,$};

    \draw[flecha] (2)to[bend left] node[above]{$r( x_7,Y)$}(3);
    \draw[flecha] (3)to[bend left] node[below]{ $r( x_1,Y_2)$}(2);
    \draw[flecha] (3)to[loop right] node[above right]{$r(x_2,Y_2)$}(3);
    \draw[flecha] (2)to[loop left] node[above left]{$ r(x_2,Y)$}(2);

\end{tikzpicture}
\end{center}

\smallskip

Hence, by \autoref{counterexamples}, $x_1 x_3 x_4 x_5 x_6$ and $x_2x_4x_5x_6x_7$ are not conjugates in $A_X$.

\medskip

\item[(d)] Suppose that $A_X$ is of type~$D_{2k}$. There exists $X\subsetneq X'\subseteq S$ so that~$A_{X'}$ is of type $D_{2k+1}$ so we can assume that~$A_S$ is of type~$D_{2k+1}$. 
We have two possible embeddings 
\begin{center}
$\iota_1$: $x_1=s_1$, $x_2=s_2$, $x_i=s_i$ for $3\leqslant i\leqslant 2k$,\\
$\iota_2$: $x_1=s_2$, $x_2=s_1$, $x_i=s_i$ for $3\leqslant i\leqslant 2k$
\end{center}
which differ by post-composing by the graph 
automorphism of $\Gamma_S$ induced by conjugation by $\Delta_S$.
Let $Y=\{s_1,s_3,\dots,s_{2k}\}$ and $Z=\{s_2,s_3,\dots,s_{2k}\}$. 

The product of adjacent ribbons $r_1=r(s_{2k+1},Y)$ and $r(s_2,Y^{r_1})$ conjugates $Y$ to $Z$ in $A_S$; it also conjugates the element $s_1s_3\cdots s_{2k}=\iota_1(x_1x_3\cdots x_{2k})$ to $s_2s_3\cdots s_{2k}=\iota_1(x_2x_3\ldots x_{2k})$. 
However, due to \autoref{counterexamples}, the two elements $x_1x_3\cdots x_{2k}$ and $x_2x_3\ldots x_{2k}$ cannot be conjugate inside the parabolic subgroup~$A_X$ because the only possible adjacent ribbon -- $r(x_2,\{x_1,x_3,\ldots,x_{2k}\})$ -- normalizes $A_{\{x_1,x_3,\dots,x_{2k}\}}$.

\item[(e)] Suppose that~$A_X$ is of type~$H_3$ and~$A_S$ is of type $H_4$. There is only one possible embedding and for $1\leqslant i\leqslant 3$, we have $x_i=s_i$. 
We are going to prove that $s_1s_3s_3$ and $s_3s_1s_1$ are conjugate in~$A_S$ but not in~$A_X$. 
One can easily verify that conjugation by $$r(s_4,\{s_1,s_3\}) \cdot r(s_2,\{s_1,s_4\})\cdot \Delta_{\{s_2,s_3,s_4\}}\cdot r(s_1,\{s_2,s_4\}) \cdot r(s_3,\{s_1,s_4\})$$ permutes $s_1$ and $s_3$ and hence conjugates $s_1s_3s_3$ to $s_3s_1s_1$. 

However, \autoref{counterexamples} shows that $x_1x_3x_3$ and $x_3x_1x_1$ are not conjugate in~$A_X$ because the adjacent ribbon $r(x_2,\{x_1,x_3\})$ commutes with~$x_1$ and~$x_3$. 
\end{itemize}

\noindent
This finishes the proof of the first part of \autoref{mainT}.

\subsection{Reducible case}
Let $A_S$ be an irreducible Artin-Tits group of spherical type and let $\emptyset\neq X\subsetneq S$ such that~$A_X$ is reducible. Let~$\Gamma_X$ be the subgraph of~$\Gamma_S$ induced by~$X$. 
We first make a preliminary observation. 
\begin{lemma}\label{L:Odd}
Let $(A_S,S)$ be any Artin-Tits system; let $\Gamma_S$ be the defining Coxeter graph. Two letters $s,t\in S$ are conjugate in $A_S$ if and only if the vertices $s$ and $t$ of the Coxeter graph $\Gamma_S$ can be connected in~$\Gamma_S$ by a path following only edges with odd labels (or no label).
\end{lemma}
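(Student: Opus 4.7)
The plan is to prove both implications directly, treating the ``if'' direction via a short computation inside the dihedral subgroup $A_{\{s,t\}}$ and the ``only if'' direction by building an abelianization-type invariant that distinguishes different equivalence classes of the ``odd-path'' relation.

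For the \emph{if} direction, I would argue by induction on the length of an odd-labeled path from $s$ to $t$ in $\Gamma_S$; transitivity of conjugacy reduces everything to the case of a single odd edge $m_{s,t}=2k+1$. In that case the defining braid relation reads
\[
(\sigma_s\sigma_t)^k\sigma_s \;=\; (\sigma_t\sigma_s)^k\sigma_t \;=\; \sigma_t(\sigma_s\sigma_t)^k,
\]
where the second equality holds in any group. Setting $g=(\sigma_s\sigma_t)^k \in A_{\{s,t\}}\subset A_S$, this rewrites as $g\sigma_s g^{-1}=\sigma_t$, so $\sigma_s$ and $\sigma_t$ are conjugate in $A_S$.

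For the \emph{only if} direction, I would introduce the equivalence relation $\sim$ on $S$ generated by: $s\sim t$ whenever $s,t$ are joined by an edge with label $3$ (no label) or an odd label $\geqslant 5$. Let $[S]=S/\sim$ and consider the free abelian group $\mathbb{Z}^{[S]}$ with basis $\{e_{[s]}\mid [s]\in [S]\}$. Define a set-map from $\Sigma_S$ to $\mathbb{Z}^{[S]}$ by $\sigma_s\mapsto e_{[s]}$; the key claim is that it extends to a group homomorphism $\phi:A_S\to\mathbb{Z}^{[S]}$. To verify this, one checks the braid relations: when $m_{s,t}=2k$, both sides $\Pi(\sigma_s,\sigma_t;2k)$ and $\Pi(\sigma_t,\sigma_s;2k)$ map to $k\,e_{[s]}+k\,e_{[t]}$ and the relation is satisfied unconditionally; when $m_{s,t}=2k+1$, the two sides map to $(k+1)e_{[s]}+k\,e_{[t]}$ and $k\,e_{[s]}+(k+1)e_{[t]}$ respectively, which are equal precisely because $[s]=[t]$ whenever an odd-labeled edge joins $s$ and $t$ — this is exactly where the definition of $\sim$ is used.

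Having $\phi$ in hand, the conclusion is immediate: since $\mathbb{Z}^{[S]}$ is abelian, conjugate elements of $A_S$ have the same $\phi$-image, so $\sigma_s$ conjugate to $\sigma_t$ forces $e_{[s]}=e_{[t]}$, i.e.\ $s\sim t$. The only step that requires any care is verifying well-definedness of $\phi$ on the odd braid relation, which is a one-line arithmetic check; I do not foresee any real obstacle. (One can equivalently phrase the argument as: $\phi$ factors through the abelianization $A_S^{\mathrm{ab}}$, which is well known to be free abelian of rank $|[S]|$ with basis $\{[\sigma_s]\mid [s]\in [S]\}$, and then the statement reduces to the observation that $[\sigma_s]=[\sigma_t]$ in $A_S^{\mathrm{ab}}$ iff $[s]=[t]$.)
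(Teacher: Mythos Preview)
Your proof is correct, but it follows a different route from the paper's. The paper argues indirectly: applying the epimorphism $\theta$ shows that if $\sigma_s$ and $\sigma_t$ are conjugate in $A_S$ then $s$ and $t$ are conjugate in $W_S$; conversely, if $s^w=t$ in $W_S$ then Lemma~\ref{LMatsumoto} (with $Y_1=\{s\}$, $Y_2=\{t\}$) gives $\sigma_s^{\tau(w)}=\sigma_t$. Thus conjugacy of $\sigma_s,\sigma_t$ in $A_S$ is equivalent to conjugacy of $s,t$ in $W_S$, and the latter is equivalent to the odd-path condition by the classical fact in Bourbaki.

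Your argument is entirely self-contained within $A_S$: the ``if'' direction is the one-line dihedral computation, and the ``only if'' direction builds the homomorphism $\phi:A_S\to\mathbb{Z}^{[S]}$ (equivalently, identifies the abelianization of $A_S$). What you gain is that you never pass through $W_S$, never invoke Lemma~\ref{LMatsumoto}, and never cite the Bourbaki result --- your proof would stand on its own in a paper that had not set up the section $\tau$. What the paper's approach buys is economy in context: since $\theta$, $\tau$, and Lemma~\ref{LMatsumoto} are already available, the whole proof compresses to two sentences plus a reference. Both arguments are standard; yours is arguably the more natural one if the goal were to prove the lemma in isolation.
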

\begin{proof}
Suppose that $s,s'$ are connected by an edge with odd label $m$ or no label, in which case we set $m=3$. We have $\Pi(s,s';m-1)s=s'\Pi(s,s';m-1)$ and $s,s'$ are conjugate. An immediate induction shows that $s,s'$ are conjugate in $A_S$ whenever they are connected in $\Gamma_S$ by a path following only edges with odd labels (or no label). 
Assume on the contrary that no path with this property connects $s$ and $s'$ in $\Gamma_S$. It follows from \citep[Chap. IV, \S1, no.3, Proposition 3]{Bourbaki} that the respective images of $s$ and $s'$ in the Coxeter group $A_S/\langle\langle s^2, s\in S\rangle\rangle$ are not conjugate; therefore $s$ and $s'$ cannot be conjugate either.  
\end{proof}

\noindent
The previous result implies that if $\Gamma_X$ has two connected components that can be connected through a path following only edges with odd labels (or no label) in $\Gamma_S$, then $A_X$ cannot be conjugacy stable in $A_S$. The only cases that do not satisfy this condition are the cases (a) and (b) of \autoref{mainT}(2). Therefore, to finish the proof of our theorem we just need to show that in these cases $A_X$ is conjugacy stable in~$A_S$.

\medskip\noindent
In both cases, we have $A_X=A_{X_1}\times A_{X_2}$, where $A_{X_1}$ is cyclic generated by a letter of $S$ which is conjugate to no other letter of $X$ and $A_{X_2}$ is a braid group.
%Then, any conjugate of any element $\alpha_1\alpha_2\in A_X$, with $\alpha_1\in A_{X_1}$ and $\alpha_2\in A_{X_2}$ can be written in the form $\alpha'_1\alpha'_2\in A_X$, where $\alpha'_1\in A_{X_1}$ is conjugate to $\alpha_1$ and $\alpha'_2\in A_{X_2}$ is conjugate to $\alpha_2$. 
By \autoref{L:MoreThanStar}, the pair $(A_X,A_S)$ has Property $\star$ and by \autoref{P:Star2}, $A_X$ is conjugacy stable in $A_S$. This completes the proof of \autoref{mainT}.

\medskip

\begin{remark}
A posteriori, one sees that, when $A_S$ is an Artin-Tits group of spherical type and $\emptyset\neq X\subset S$, $A_X$ is conjugacy stable in $A_S$ if and only if $(A_X,A_S)$ satisfies Property~$\star$.
\end{remark}

%%%%%%%%%%%%%%%%%%%%%%%%%%%%%%%%%%%%%%%%%%%%%%%%%%%%%%%%%%%%%%%%%%%%%%%%%%%%%%%%%

\bigskip
\noindent
{\bf Funding.}
This work was supported by Fondo Nacional de Desarrollo Cient\'ifico y Tecnol\'ogico [Iniciaci\'on 11140090 to M.Ca. and B.C.dlC., Regular 1180335 to M.Ca. and B.C.dlC.]; Comisi\'on Nacional de Investigaci\'on Cient\'ifica y Tecnol\'ogica [PIA ACT1415 to M.Ca., PAI 79160023 to M.Ca.];  Ministerio Espa\~nol de Ciencia y Competitividad [MTM2016-76453-C2-1-P to M.Ca. and M. Cu.]; Junta de Andaluc\'ia [FQM-218 to M.Ca. and M.Cu.]; Fondo Europeo de Desarrollo Regional to M. Ca and M.Cu; and Consejo Nacional de Ciencia y Tecnolog\'ia [Convocatoria de Investigaci\'on Cient\'ifica B\'asica 2016, Referencia: 284621 to B.C.dlC.]

\bigskip
\noindent
{\bf {Acknowledgements.}} {The first author is very grateful to Ivan Marin for suggesting the problem under study during the Conference SUMA 2016 in Valpara\'iso, Chile. He also thanks Jes\'us Juyumaya for suggesting interesting readings. All three authors thank Luis Paris for his interest and comments on the problem. We also thank the referee for reading this article and suggesting to simplify the proofs.}

\bibliography{BiblioConj}

\end{document}